\newcommand\Z{\mathbb{Z}}
\newcommand\cA{{\mathcal A}}
\newcommand\cC{{\mathcal C}}
\newcommand\cL{{\mathcal L}}
\DeclareMathOperator{\sgn}{sgn}
\newtheorem{thm}{Theorem}
\newtheorem{lem}[thm]{Lemma}
\newtheorem{corollary}[thm]{Corollary}
\theoremstyle{definition}		% use "definition-style" font for the rest
\newtheorem{mydef}[thm]{Definition}
\newtheorem{example}[thm]{Example}
\newtheorem{remark}[thm]{Remark}
\newtheorem{remarks}[thm]{Remarks}
\newtheorem*{proof-sketch}{Sketch of Proof}
\begin{document}

\title{Geometric intersection of curves on punctured disks}

\author{S.Öykü \textsc{Yurttaş}}

\classification{57N16, 57N37,57N05}

\keyword{geometric intersection, Dynnikov coordinates}

\label{startpage}

\maketitle

\abstract{We give a recipe to compute the geometric intersection number of an integral lamination with a particular type of integral lamination on an $n$-times punctured disk. This provides a way to find the geometric intersection number of two arbitrary integral laminations when combined with an algorithm of Dynnikov and Wiest.}

\section{Introduction}
Given a surface $M$ of genus $g$ with $s$ boundary components, a well known way of giving coordinates to integral laminations (i.e. a disjoint union of finitely many essential simple closed curves on $M$ modulo isotopy) and measured foliations is to use either the Dehn-Thurston coordinates or train track coordinates. See \cite{penner} for details. 

	An alternative way to coordinatize integral laminations and measured foliations on an $n$-times punctured disk $D_n$ is achieved by the \emph{Dynnikov coordinate system}. That is, Dynnikov coordinate system provides an explicit bijection between the set of integral laminations $\mathcal{L}_n$ on $D_n$ and $\mathbb{Z}^{2n-4}\setminus \{0\}$; and the set of measured foliations up to isotopy and Whitehead equivalence on $D_n$ and $\mathbb{R}^{2n-4}\setminus \{0\}$. 	
	
	Isotopy classes of orientation preserving homeomorphisms on punctured disks are described by elements of Artin's braid groups $B_n$ \cite{emil1,emil2} and the action of $B_n$ on $\mathcal{L}_n$ in terms of Dynnikov coordinates is described by the \emph{update rules} \cite{D02,M06,paper}.

	 The Dynnikov coordinate system together with the Dynnikov formulae~(update rules) was introduced in \cite{D02}. Then, it was studied in \cite{D07, DDRW02} as an efficient method for a solution of the word problem of $B_n$ and in \cite{M06,FT07,paper} for computing the topological entropy of braids. 
	 
	 In this paper, we shall use the Dynnikov coordinate system to study the geometric intersection number of two integral laminations on an $n$-times punctured disk. In particular, we shall give Theorem \ref{thm:formula} which gives a recipe to compute the geometric intersection number of an integral lamination with a particular type of integral lamination, known as a \emph{relaxed integral lamination}. This provides a way to find the geometric intersection number of two arbitrary integral laminations when combined with an algorithm of Dynnikov and Wiest \cite{wiest}, see Remark \ref{remark10}.

\section{Dynnikov Coordinates}\label{Dynnikovcoordinates}
The aim of this section is to describe the Dynnikov coordinate system for  the set of integral laminations $\cL_n$ and prove that there is an explicit bijection between $\cL_n$ and $\mathbb{Z}^{2n-4}\setminus \{0\}$.  We shall begin with the triangle coordinates which describe each integral lamination by an element of $\mathbb{Z}^{3n-5}$ using its geometric intersection number with given $3n-5$ embedded arcs in $D_n$. \emph{Dynnikov coordinates} \cite{D02} are certain linear combinations of these integers and yield a one-to-one correspondence between $\cL_n$ and $\cC_n=\mathbb{Z}^{2n-4}\setminus \{0\}$. This will be proved by Theorem \ref{lem:dynninvert} which gives the inversion of Dynnikov coordinates.

	Let $\cA_n$ be the set of arcs in~$D_n$~which have each endpoint either on the boundary or at a puncture. The arcs $\alpha_i \in \cA_n$ ($1\le i\le 2n-4$) and $\beta_i\in \cA_n$ ($1\leq i\leq n-1$) are as depicted in Figure~\ref{delta}: the arcs
$\alpha_{2i-3}$ and $\alpha_{2i-2}$ (for $2\le i\le n-1$) join the
$i^{\text{th}}$ puncture to the boundary, while the arc $\beta_i$ has
both endpoints on the boundary and passes between the $i^{\text{th}}$
and $i+1^{\text{th}}$ punctures.

\begin{figure}[h!]
\begin{center}
\psfrag{1}[tl]{$\scriptstyle{\alpha_1}$} 
\psfrag{b1}[tl]{$\scriptstyle{\beta_1}$} 
\psfrag{b2}[tl]{$\scriptstyle{\beta_{i+1}}$} 
\psfrag{bi}[tl]{$\scriptstyle{\beta_{i}}$}
\psfrag{b6}[tl]{$\scriptstyle{\beta_{n-1}}$} 
\psfrag{2i}[tl]{$\scriptstyle{\alpha_{2i+2}}$}
\psfrag{2}[tl]{$\scriptstyle{\alpha_2}$} 
\psfrag{2i-5}[tl]{$\scriptstyle{\alpha_{2i-3}}$}
\psfrag{2i-2}[tl]{$\scriptstyle{\alpha_{2i}}$}
\psfrag{2i-3}[tl]{$\scriptstyle{\alpha_{2i-1}}$}
\psfrag{2i-1}[tl]{$\scriptstyle{\alpha_{2i+1}}$}
\psfrag{2i-4}[tl]{$\scriptstyle{\alpha_{2i-2}}$}
\psfrag{n-4}[tl]{$\scriptstyle{\alpha_{2n-4}}$} 
\psfrag{n-5}[tl]{$\scriptstyle{\alpha_{2n-5}}$} 
\psfrag{d1}[tl]{$\scriptstyle{\Delta_1}$} 
\psfrag{d2}[tl]{$\scriptstyle{\Delta_2}$}
\psfrag{d3}[tl]{$\scriptstyle{\Delta_3}$}
\psfrag{dn-1}[tl]{$\scriptstyle{\Delta_{2n-4}}$} 
\psfrag{dn}[tl]{$\scriptstyle{\Delta_{2n-3}}$}
\psfrag{di}[tl]{$\scriptstyle{\Delta_{2i}}$}
\psfrag{di-1}[tl]{$\scriptstyle{\Delta_{2i-1}}$}
\psfrag{d0}[tl]{$\scriptstyle{\Delta_0}$}
\psfrag{di+1}[tl]{$\scriptstyle{\Delta_{2i+1}}$}
\psfrag{di-2}[tl]{$\scriptstyle{\Delta_{2i-2}}$}
\includegraphics[width=0.97\textwidth]{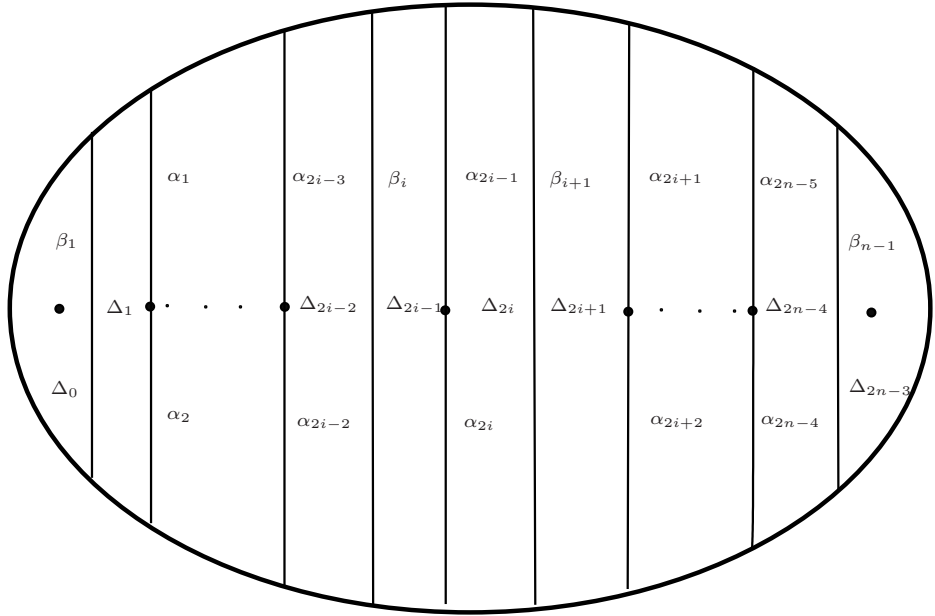}
%\resizebox{0.5\textwidth, angle=-90}{!}{\includegraphics{famcurves}}
\caption{The arcs $\alpha_i$, $\beta_i$ and triangular regions $\Delta_{i}$}\label{delta}
\end{center}
\end{figure}

Observe that the arcs divide the disk into $2n-2$ (closed) regions and $2n-4$ of these are~\emph{triangular}: Identifying the outer boundary of the disk with a point, each region on the left and right side of the $i^{\text{th}}$ puncture for $2\leq i \leq n-1$ is a triangle since it is bounded by three arcs. 

	The two triangles $\Delta_{2i-3}$ and $\Delta_{2i-2}$ on the left and right side of the $i^{\text{th}}$ puncture are defined by the arcs $\alpha_{2i-3},~\alpha_{2i-2},~\beta_{i-1}$~and ~$\alpha_{2i-3},~\alpha_{2i-2},~\beta_{i}$ respectively and the two end regions $\Delta_0$ and $\Delta_{2n-3}$ are bounded by~$\beta_1$ and $\beta_{n-1}$ respectively. See Figure~\ref{delta}.

		A naive way to describe integral laminations is achieved by triangle coordinates: Given $[\alpha]$ (the isotopy class of an arc $\alpha\in \mathcal{A}_n$ under isotopies through~$\cA_n$) and an integral lamination $\mathcal{L}$, we shall write $\alpha$ for the geometric intersection number of $\cL\in\cL_n$ with the arc~$\alpha\in\cA_n$: it will be clear from the context whether we mean the arc or the geometric intersection number assigned on the arc.
		
We also note that if $\mathcal{L}\in \mathcal{L}_n$ there is some curve system $L\in \mathcal{L}$ which is \emph{taut} (has minimum number of intersections in its homotopy class with each $\alpha_i$ and $\beta_i$). 
We fix a taut representative $L$ of a given integral lamination $\mathcal{L}\in \mathcal{L}_n$ throughout.

	For each $i$ with $1\leq i\leq n-2$, define $S_i=\Delta_{2i-1}\cup\Delta_{2i}$ (see Figure \ref{delta}). 
A \emph{path component} of $L$ in $S_i$ is a component of $L\cap S_i$. There are four types of path components in $S_i$. An \emph{above component} has end points on $\beta_i$ and $\beta_{i+1}$ and passes across $\alpha_{2i-1}$. A \emph{below component} has end points on $\beta_i$ and $\beta_{i+1}$ and passes across $\alpha_{2i}$. A \emph{left loop component} has both end points on $\beta_{i+1}$ and a \emph{right loop component} has both end points on $\beta_{i}$.

	The solid lines in Figure~\ref{pathcomponents} depict the above and below components. Left and right loop components are depicted by dashed lines.
	Note that there is one type of path component in the end regions: \emph{left loop components} in region $\Delta_0$ and \emph{right loop components} in region $\Delta_{2n-3}$.

\begin{figure}[h!]
\centering
\psfrag{2i-1}[tl]{$\scriptstyle{\alpha_{2i-1}}$} 
\psfrag{i+1}[tl]{$\scriptstyle{\beta_{i+1}}$} 
\psfrag{i}[tl]{$\scriptstyle{\beta_{i}}$} 
\psfrag{2i}[tl]{$\scriptstyle{\alpha_{2i}}$}
\psfrag{d1}[tl]{$\scriptstyle{\Delta_{2i-1}}$} 
\psfrag{d2}[tl]{$\scriptstyle{\Delta_{2i}}$} 
\psfrag{si}[tl]{$\scriptstyle{S_{i}}$} 
\includegraphics[width=0.75\textwidth]{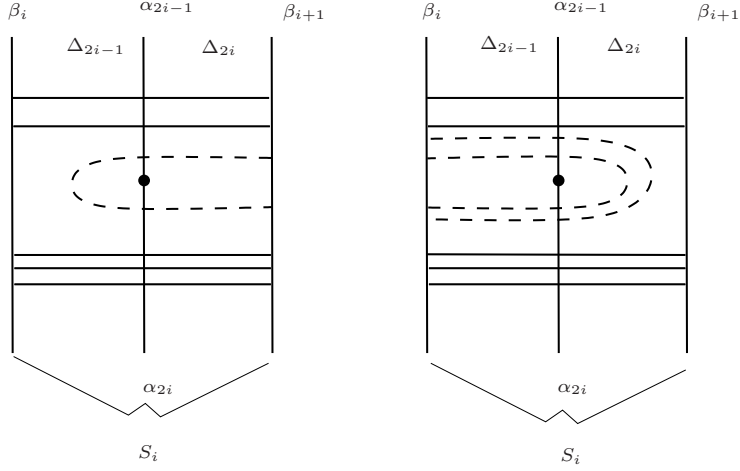}
%\resizebox{0.5\textwidth, angle=-90}{!}{\includegraphics{famcurves}}
\caption{Above, below, left loop and right loop components} \label{pathcomponents}
\end{figure}

\begin{remark}
We note that there could only be one of the two types of loop components (i.e. right or left) in each $S_i$ since the curves in $L$ are mutually disjoint. 
\end{remark}
For each $1\leq i \leq n-2$ we define 
 \begin{align}\label{bieq}
 b_i=\frac{\beta_i-\beta_{i+1}}{2}.
 \end{align}
Then $|b_i|$ gives the number of loop components in $S_i$ and $\epsilon_i=\sgn(b_i)$ tells whether the loop components are left or right. That is, when $b_i>0$ the loop components are right and when $b_i<0$ the loop components are left. See Figure~\ref{pathcomponents}: on the left, $\beta_{i+1}=\beta_i+2$ (so $b_i=-1$) and the additional two intersections of $L$ with $\beta_{i+1}$ yield one left loop component. Similarly, on the right  $\beta_{i}=\beta_{i+1}+4$ (so $b_i=2$) and the additional four intersections of $L$ with $\beta_{i}$ yield two right loop components.

The following Lemma is obvious since each above and below component intersects $\alpha_{2i-1}$ and $\alpha_{2i}$ respectively.
\begin{lem}\label{abovebelow}
The numbers of above and below components in region $S_i$ are given by $\alpha_{2i-1}-|b_i|$ and $\alpha_{2i}-|b_i|$ respectively.
\end{lem}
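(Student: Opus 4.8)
The plan is to count, in two different ways, the total number of intersections of the taut representative $L$ with each of the arcs $\alpha_{2i-1}$ and $\alpha_{2i}$, and then read off the numbers of above and below components. Since $L$ is taut, its number of intersections with $\alpha_{2i-1}$ is exactly the geometric intersection number denoted $\alpha_{2i-1}$ (and likewise for $\alpha_{2i}$), so it suffices to determine how each path component of $L\cap S_i$ contributes to these two totals. Recall that $S_i$ surrounds the $(i+1)^{\text{st}}$ puncture and that every component of $L\cap S_i$ is of one of the four listed types, with only one loop type present.

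First I would record the contribution of each type. By definition an above component crosses $\alpha_{2i-1}$ and, lying above the puncture, is disjoint from $\alpha_{2i}$; symmetrically a below component crosses $\alpha_{2i}$ once and misses $\alpha_{2i-1}$. The essential point is the behaviour of the loop components: a right (resp.\ left) loop has both endpoints on $\beta_i$ (resp.\ $\beta_{i+1}$) and, in order to be essential, must encircle the $(i+1)^{\text{st}}$ puncture $P$. Since the union $\alpha_{2i-1}\cup\{P\}\cup\alpha_{2i}$ separates $S_i$ into its two triangles $\Delta_{2i-1}$ and $\Delta_{2i}$, any loop enclosing $P$ must pass from one triangle to the other above the puncture and return below it, so it meets each of $\alpha_{2i-1}$ and $\alpha_{2i}$ exactly once.

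Summing these contributions gives the two counting identities $\alpha_{2i-1}=(\#\,\text{above})+(\#\,\text{loops})$ and $\alpha_{2i}=(\#\,\text{below})+(\#\,\text{loops})$, because above components and loops are precisely the components meeting $\alpha_{2i-1}$, while below components and loops are precisely those meeting $\alpha_{2i}$. It was already established, via the definition $b_i=(\beta_i-\beta_{i+1})/2$, that the number of loop components in $S_i$ equals $|b_i|$; substituting and solving for the two unknowns yields $\#\,\text{above}=\alpha_{2i-1}-|b_i|$ and $\#\,\text{below}=\alpha_{2i}-|b_i|$, as claimed.

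The only genuinely non-trivial step---and the one the word ``obvious'' glosses over---is the verification of the crossing numbers, in particular that a loop component meets each of $\alpha_{2i-1}$ and $\alpha_{2i}$ exactly once and that an above (resp.\ below) component meets its own arc exactly once and the other arc not at all. This is where tautness of $L$ is indispensable: minimality of the intersection with each arc forbids superfluous crossings, since any extra pair of crossings would bound a bigon removable by isotopy, forcing each component into the simplest position compatible with its type. Once this is granted, the rest is the elementary linear bookkeeping above, and I expect no further difficulty.
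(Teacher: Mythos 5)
Your proof is correct and is exactly the argument the paper compresses into its one-line remark that the lemma ``is obvious since each above and below component intersects $\alpha_{2i-1}$ and $\alpha_{2i}$ respectively'': you count intersections of the taut representative with each of $\alpha_{2i-1}$ and $\alpha_{2i}$ by component type, note that each loop component contributes one crossing to each arc while above (resp.\ below) components contribute only to $\alpha_{2i-1}$ (resp.\ $\alpha_{2i}$), and subtract the loop count $|b_i|$ established from $b_i=(\beta_i-\beta_{i+1})/2$. Your explicit justification of the crossing numbers via tautness and the bigon-removal argument simply fills in what the paper's ``obvious'' leaves implicit (cf.\ its Figure showing the four component types), so no gap remains.
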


Similarly, the next Lemma is obvious from Figure \ref{tri22} and Figure \ref{tri21}. 

\begin{lem}\label{equalities}

There are equalities for each $S_i$: 

	When there are left loop components ($b_i< 0$), 
\begin{align}
\alpha_{2i}+\alpha_{2i-1}&=\beta_{i+1}\\
\alpha_{2i}+\alpha_{2i-1}-\beta_i&=2|b_i|,
\end{align}

when there are right loop components ($b_i>0$),
\begin{align}
\alpha_{2i}+\alpha_{2i-1}&=\beta_{i}\\
\alpha_{2i}+\alpha_{2i-1}-\beta_{i+1}&=2|b_i|,
\end{align}

and when there are no loop components ($b_i=0$),
\begin{align}
\alpha_{2i}+\alpha_{2i-1}=\beta_i=\beta_{i+1}.
\end{align}

\end{lem}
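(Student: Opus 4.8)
The plan is to prove all the identities by a single endpoint-counting argument on the two arcs $\beta_i$ and $\beta_{i+1}$ that bound the region $S_i$, feeding in the counts of above and below components supplied by Lemma~\ref{abovebelow}. The observation driving everything is that, since $L$ is taut, every intersection point of $L$ with $\beta_i$ (respectively $\beta_{i+1}$) is an endpoint of exactly one path component of $L\cap S_i$, so the intersection numbers $\beta_i$ and $\beta_{i+1}$ are nothing more than the total number of such endpoints lying on each arc.

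First I would tabulate, for each of the four types of path component, how many endpoints it places on $\beta_i$ and on $\beta_{i+1}$: an above or a below component has one endpoint on each of $\beta_i$ and $\beta_{i+1}$; a right loop component places both of its endpoints on $\beta_i$ and none on $\beta_{i+1}$; and a left loop component places both on $\beta_{i+1}$ and none on $\beta_i$. Writing $A$ and $B$ for the numbers of above and below components and recalling from the Remark that at most one type of loop occurs in $S_i$, summing endpoints gives
\begin{align*}
\beta_i &= A+B+2R, & \beta_{i+1} &= A+B+2L,
\end{align*}
where $R$ and $L$ denote the numbers of right and left loop components, with at least one of them zero.

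Next I would substitute $A=\alpha_{2i-1}-|b_i|$ and $B=\alpha_{2i}-|b_i|$ from Lemma~\ref{abovebelow}, so that $A+B=\alpha_{2i-1}+\alpha_{2i}-2|b_i|$, and split into the three sign cases. When $b_i>0$ we have $R=|b_i|$ and $L=0$, whence $\beta_i=\alpha_{2i-1}+\alpha_{2i}$ and $\beta_{i+1}=\alpha_{2i-1}+\alpha_{2i}-2|b_i|$; when $b_i<0$ the roles of $\beta_i$ and $\beta_{i+1}$ are interchanged; and when $b_i=0$ both loop counts vanish and the two arcs carry equal intersection numbers $\alpha_{2i-1}+\alpha_{2i}$. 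In each case the first displayed equality of the Lemma is exactly one of these, and the second equality is then immediate from the defining relation $b_i=(\beta_i-\beta_{i+1})/2$ of~\eqref{bieq}, which rewrites $\alpha_{2i-1}+\alpha_{2i}$ minus the opposite arc as $\pm(\beta_i-\beta_{i+1})=2|b_i|$.

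The computation itself is routine arithmetic; the only point requiring care -- and the step I would treat as the genuine content -- is the justification of the endpoint bookkeeping, namely that the four component types are exhaustive and that each boundary intersection corresponds to a unique component endpoint. This is exactly where tautness of $L$ is indispensable: without it, inessential path components (arcs cutting off a disk meeting no puncture) could contribute extra intersections on $\beta_i$ or $\beta_{i+1}$ that are not accounted for by any of the four types, breaking the clean equality between intersection numbers and endpoint counts. I would therefore make explicit at the outset that both the classification of path components and the identities $\beta_i=A+B+2R$ and $\beta_{i+1}=A+B+2L$ rely on the fixed taut representative.
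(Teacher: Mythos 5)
Your proof is correct: the endpoint bookkeeping $\beta_i=A+B+2R$, $\beta_{i+1}=A+B+2L$ combined with Lemma~\ref{abovebelow} yields all five identities, and your case analysis checks out. The paper itself offers no written argument---it declares the lemma ``obvious'' from Figures~\ref{tri22} and~\ref{tri21}---and your count of component endpoints on $\beta_i$ and $\beta_{i+1}$, together with the remark that tautness is what makes the four-type classification exhaustive, is exactly the formalization of what those figures depict, so you have taken essentially the paper's (implicit) approach.
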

Note that Lemma \ref{equalities} implies that some coordinates are redundant.

\begin{figure}[h!]
\centering
\psfrag{a}[tl]{$\scriptstyle{\alpha_{2i-1}}$} 
\psfrag{c}[tl]{$\scriptstyle{\beta_{i+1}}$} 
\psfrag{f}[tl]{$\scriptstyle{\beta_{i}}$} 
\psfrag{b}[tl]{$\scriptstyle{\alpha_{2i}}$}
\psfrag{d1}[tl]{$\scriptstyle{\Delta_{2i-1}}$} 
\psfrag{d2}[tl]{$\scriptstyle{\Delta_{2i}}$} 
\includegraphics[width=0.75\textwidth]{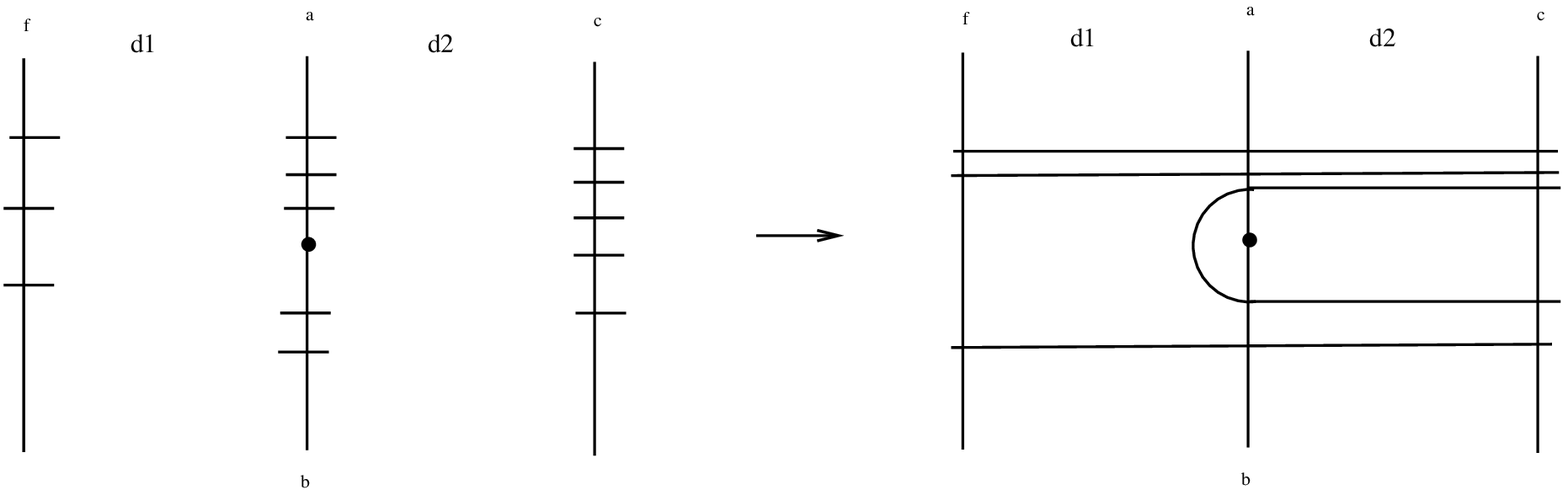}
%\resizebox{0.5\textwidth, angle=-90}{!}{\includegraphics{famcurves}}
\caption{Left loop components and the case is $b_i\leq 0 $} \label{tri22}
\end{figure}

\begin{figure}[h!]
\centering
\psfrag{a}[tl]{$\scriptstyle{\alpha_{2i-1}}$} 
\psfrag{c}[tl]{$\scriptstyle{\beta_{i+1}}$} 
\psfrag{f}[tl]{$\scriptstyle{\beta_{i}}$} 
\psfrag{b}[tl]{$\scriptstyle{\alpha_{2i}}$}
\psfrag{d1}[tl]{$\scriptstyle{\Delta_{2i-1}}$} 
\psfrag{d2}[tl]{$\scriptstyle{\Delta_{2i}}$} 
 \includegraphics[width=0.8\textwidth]{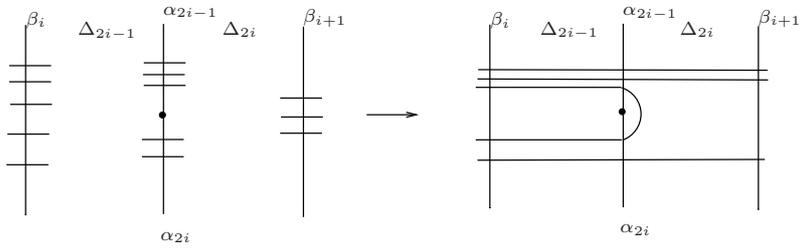}
%\resizebox{0.5\textwidth, angle=-90}{!}{\includegraphics{famcurves}}
\caption{Right loop components and the case is $b_i\geq 0$} \label{tri21}
\end{figure}

\begin{figure}[h!]
\begin{center}
\psfrag{b1}[tl]{$\scriptstyle{4}$} 
\psfrag{bi}[tl]{$\scriptstyle{8}$} 
\psfrag{b2}[tl]{$\scriptstyle{8}$}
\psfrag{b6}[tl]{$\scriptstyle{4}$} 
\psfrag{2i}[tl]{$\scriptstyle{4}$}
\psfrag{2i-5}[tl]{$\scriptstyle{2}$}
\psfrag{2i-2}[tl]{$\scriptstyle{5}$}
\psfrag{2i-3}[tl]{$\scriptstyle{3}$}
\psfrag{2i-1}[tl]{$\scriptstyle{4}$}
\psfrag{2i-4}[tl]{$\scriptstyle{6}$}
\includegraphics[width=0.7\textwidth]{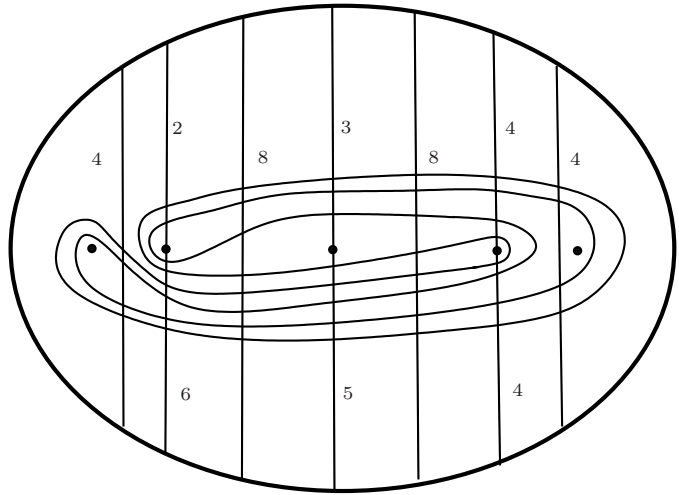}
%\resizebox{0.5\textwidth, angle=-90}{!}{\includegraphics{famcurves}}
\caption{$\tau(\mathcal{L})=(2,6,3,5,4,4;4,8,8,4)$.
} \label{munu2}
\end{center}
\end{figure}

	The triangle coordinate function $\tau:\mathcal{L}_n\to \mathbb{Z}_{\geq 0}^{3n-5}$ is defined by  
$$\tau(\mathcal{L})=(\alpha_1,\dots,\alpha_{2n-4},\beta_1,\dots,\beta_{n-1}).$$

$\tau:\mathcal{L}_n\to \mathbb{Z}_{\geq 0}^{3n-5}$ is injective: working in each region $S_i$, we can determine the number of above, below and right/left loop components. Therefore, the path components in each $S_i$ are connected in a unique way up to isotopy and hence $\mathcal{L}$ is determined uniquely.

However, it is not always possible to construct an integral lamination from given triangle coordinates. Namely, $\tau:\mathcal{L}_n\to \mathbb{Z}_{\geq 0}^{3n-5}$ is not surjective since $\tau(\mathcal{L})$ must satisfy the triangle inequality in each of the
strips of Figure~\ref{delta}, as well as additional conditions such as the equalities in Lemma \ref{equalities}. Next, we shall discuss what properties an integral lamination $\mathcal{L}\in \mathcal{L}_n$ satisfies in terms of its triangle coordinates and construct a new coordinate system from the triangle coordinates which describes integral laminations in a unique way. Namely, we shall describe the \emph{Dynnikov coordinate system} \cite{D02}.

Given a taut representative $L$ of $\mathcal{L}\in \mathcal{L}_n$ one can initially observe the following:
\begin{remarks}\label{facts}
\begin{enumerate}[i.]
\item  Every component of $L$ intersects each $\beta_i$ an even number of times. Also recall that $b_i=\frac{\beta_i-\beta_{i+1}}{2}$ and $|b_i|$ gives the number of loop  components in $S_i$. When $b_i>0$ the loop components are right and when $b_i<0$ the loop components are left (Figure \ref{looplocal}).

\item\label{three} Set $x_i=|\alpha_{2i}-\alpha_{2i-1}|$ and $m_i=\min\{\alpha_{2i-1}-|b_i|,\alpha_{2i}-|b_i|\}$; $1\leq i\leq n-2$. Then $x_i$ gives the difference between the number of above and below components in $S_i$, and $m_i$ gives the smaller of these two numbers by Lemma~\ref{abovebelow} (Figure \ref{looplocal}). We note that $x_i$ is even since each simple closed curve in $L$ intersects $\alpha_{2i}\cup\alpha_{2i-1}$ an even number of times.

\begin{figure}[h!]
\begin{center}
\psfrag{2i-1}[tl]{$\scriptstyle{\alpha_{2i-1}}$} 
\psfrag{2i}[tl]{$\scriptstyle{\alpha_{2i}}$} 
\psfrag{bi}[tl]{$\scriptstyle{\beta_i}$}
\psfrag{bi+1}[tl]{$\scriptstyle{\beta_{i+1}}$}
\psfrag{mi}[tl]{$\scriptstyle{m_i}$}
\psfrag{m}[tl]{$\scriptstyle{|b_i|}$}
\psfrag{d1}[tl]{$\scriptstyle{\Delta_{2i-1}}$}
\psfrag{d2}[tl]{$\scriptstyle{\Delta_{2i}}$}
\psfrag{xi+mi}[tl]{$\scriptstyle{x_i+m_i}$}
\includegraphics[width=0.9\textwidth]{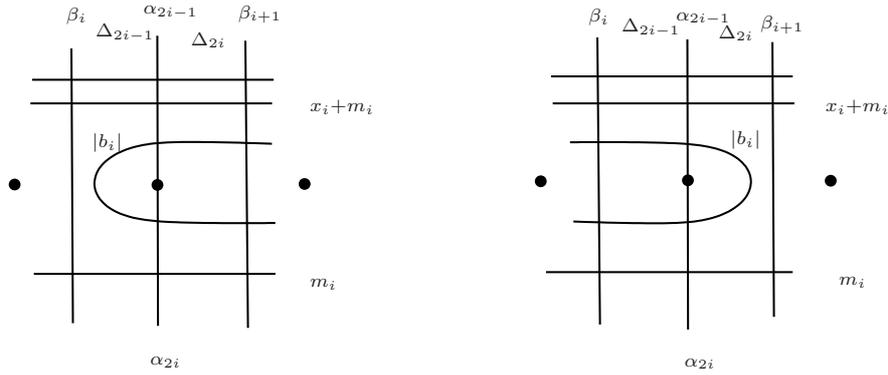}
%%\resizebox{0.5\textwidth, angle=-90}{!}{\includegraphics{famcurves}}
\caption{Number of above and below components in $S_i$} 
\label{looplocal}
\end{center}
\end{figure}

\item\label{four} Set~$2a_i=\alpha_{2i}-\alpha_{2i-1}$;~$1\leq i\leq n-2$, ($a_i$ is an integer since $|a_i|=\frac{x_i}{2})$. Assume that
$b_i\geq 0$. Then, $\beta_i=\alpha_{2i}+\alpha_{2i-1}$ by Lemma \ref{equalities}. Since $2a_i=\alpha_{2i}-\alpha_{2i-1}$ it follows that

\begin{align*}
\alpha_{2i}= a_i+\frac{\beta_i}{2}; \quad \text{and}\quad \alpha_{2i-1}=-a_i+\frac{\beta_i}{2}.
\end{align*}

A similar calculation for $b_i\leq 0$ gives

\begin{align*}
\alpha_{2i}= a_i+\frac{\beta_{i+1}}{2};  \quad \text{and}\quad \alpha_{2i-1}=-a_i+\frac{\beta_{i+1}}{2}.
\end{align*}

That is to say:
\begin{eqnarray*}
\alpha_i&= \left\{ \begin{array}{ll}
         (-1)^ia _{\lceil i/2 \rceil}+\frac{\beta_{\lceil i/2\rceil}}{2}& \mbox{if $b_{\lceil i/2\rceil} \geq 0$};\\
        (-1)^i a_{\lceil i/2\rceil}+\frac{\beta_{1+\lceil i/2\rceil}}{2}& \mbox{if $b_{\lceil i/2\rceil}\leq 0$}   
         \end{array} \right. 
\end{eqnarray*} 
where $\lceil x\rceil$ denotes the smallest integer which is not less than $x$.

\item It is straightforward to compute $\beta_i$;~$1\leq i\leq n-1$ from item \ref{three}. and item \ref{four}..

\begin{align}\beta_i = \left\{ \begin{array}{ll}
         2m_i+2\left|a_i\right| & \mbox{if $b_i\leq 0$};\\
         2m_i+2\left|a_i\right|+ 2b_i & \mbox{if $b_i \geq 0$}.\end{array} \right.
         \end{align}

That is,

\begin{eqnarray*}
\beta_i=2\left[\left|a_i \right|+\max(b_i,0)+m_i\right].
\end{eqnarray*}

Since $\beta_i = \beta_1 - 2\displaystyle\sum^{i-1}_{j=1} b_j$ by (\ref{bieq}),

\begin{align*}
\beta_1=2\left[\left|a_i \right|+\max(b_i,0)+m_i+\displaystyle\sum^{i-1}_{j=1}b_j \right]\quad  \text{for}\quad 1\leq i \leq n-2.
\end{align*}
\item A crucial observation is that $m_i=0$ for some $1\leq i\leq n-1$ since otherwise there would be both above and below components in each $S_i$ and hence the integral lamination would have a curve parallel to $\partial D_n$. Then,
 
When $m_i=0$;

\begin{eqnarray*}
\beta_1=2\left[\left|a_i \right|+\max(b_i,0)+\displaystyle\sum^{i-1}_{j=1} b_j \right].
\end{eqnarray*}

When $m_i>0$;

\begin{eqnarray*}
\beta_1>2\left[\left|a_i \right|+\max(b_i,0)+\displaystyle\sum^{i-1}_{j=1} b_j\right].
\end{eqnarray*}

Therefore,

\begin{eqnarray*}
\beta_1=\max_{1\leq k \leq{n-2}}2\left[{\left|a_k \right|+\max(b_k,0)+\displaystyle\sum^{k-1}_{j=1}}b_j\right].
\end{eqnarray*}
\end{enumerate}
\end{remarks}

We have seen that $\alpha_i$ and $\beta_i$ have been recovered from $a_i$ and $b_i$ where 
$$a_i=\frac{\alpha_{2i}-\alpha_{2i-1}}{2} \quad \text{and} \quad b_i=\frac{\beta_i-\beta_{i+1}}{2}.$$

	Now, we are ready to define the \emph{Dynnikov coordinate system} which has the advantage to coordinatize $\cL_n$ bijectively and with the least number of coordinates. 

\begin{mydef}
The {\em Dynnikov coordinate function} $\rho:\cL_n\to\Z^{2n-4}\setminus\{0\}$ is defined by

\begin{eqnarray*}
\rho(\cL) = (a,b)=(a_1,\ldots,a_{n-2},\,b_1,\ldots,b_{n-2}),
\end{eqnarray*}
where for $1\le i\le n-2$

\begin{eqnarray}
a_i=\frac{\alpha_{2i}-\alpha_{2i-1}}{2}\qquad
\text{and} \qquad b_i=\frac{\beta_i-\beta_{i+1}}{2} \label{dynnikov}
\end{eqnarray}
Let $\cC_n=\Z^{2n-4}\setminus\{0\}$ denote the space of Dynnikov coordinates of integral laminations on $D_n$.
\end{mydef}

\begin{example}

The integral lamination $\mathcal{L}$ in Figure~\ref{munu2} has Dynnikov coordinates $\rho(\mathcal{L})=(2,1,0,-2,0,2).$ We have,

\begin{align*}
\alpha_1&=2,\qquad \beta_1=4,\qquad a_1=\frac{\alpha_2-\alpha_1}{2}=\frac{6-2}{2} =2\\
\alpha_2&=6,\qquad \beta_2=8,\qquad a_2=\frac{\alpha_4-\alpha_3}{2}=\frac{5-3}{2} =1\\
\alpha_3&=3,\qquad \beta_3=8,\qquad a_3=\frac{\alpha_5-\alpha_6}{2}=\frac{4-4}{2} =0\\
\alpha_4&=5,\qquad \beta_4=4,\qquad b_1=\frac{\beta_1-\beta_2}{2}= \frac{4-8}{2}=-2\\
\alpha_5&=4, \qquad \qquad \quad\qquad  ~b_2=\frac{\beta_2-\beta_3}{2}=\frac{8-8}{2} =0\\
\alpha_6&=4, \qquad \qquad \quad \qquad ~b_3=\frac{\beta_3-\beta_4}{2}=\frac{8-4}{2} =2.
\end{align*}
Note that $b_i$ can easily be read off from a picture of the lamination by counting the number of loop components and checking whether they are left or right. For example, there are two left loop components in $S_1$, therefore $b_1$ should be $-2$. 
\end{example}

\begin{thm}[Inversion of Dynnikov coordinates]
\label{lem:dynninvert}
Let $(a,b) \in \cC_n$. Then $(a,b)$ is the Dynnikov coordinate of exactly one element $\cL$ of $\cL_n$, which has 
\begin{align}
\beta_i&=2\max_{1\leq k\leq n-2}\left[|a_k|+\max(b_k,0)+\sum_{j=1}^{k-1}b_j\right]-2\sum_{j=1}^{i-1}b_j 
\label{opopp}
\end{align}
\begin{align}
\alpha_i&= \left\{ \begin{array}{ll}
         (-1)^i a_{\lceil i/2 \rceil}+\frac{\beta_{\lceil i/2\rceil}}{2}& \mbox{if $b_{\lceil i/2\rceil} \geq 0$};\\
        (-1)^i a_{\lceil i/2\rceil}+\frac{\beta_{1+\lceil i/2\rceil}}{2}& \mbox{if $b_{\lceil i/2\rceil} \leq 0$}   
         \end{array} \right.
\label{op}
\end{align}

where $\lceil x \rceil$ denotes the smallest integer which is not less than $x$. 

\end{thm}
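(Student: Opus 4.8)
The plan is to establish three facts, which together yield the statement: that the formulas \eqref{opopp} and \eqref{op} express the triangle coordinates $(\alpha_i,\beta_i)$ of a lamination purely in terms of its Dynnikov coordinates $(a,b)$; that $\rho$ is injective; and that every point of $\cC_n$ is the image of some lamination. Since the only curve system that would produce $(a,b)=0$ is the empty one, which $\cC_n$ excludes, the ``exactly one element'' conclusion then follows.

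I would dispatch the first two facts directly from Remarks~\ref{facts}. Item~\ref{four}. there already records $\alpha_i$ as $(-1)^i a_{\lceil i/2\rceil}+\tfrac12\beta_{\lceil i/2\rceil}$ or $(-1)^i a_{\lceil i/2\rceil}+\tfrac12\beta_{1+\lceil i/2\rceil}$ according to the sign of $b_{\lceil i/2\rceil}$, which is precisely \eqref{op}; and the final item expresses $\beta_1$ as $\max_{k}2[\,|a_k|+\max(b_k,0)+\sum_{j<k}b_j\,]$, which together with $\beta_i=\beta_1-2\sum_{j<i}b_j$ from \eqref{bieq} is precisely \eqref{opopp}. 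Hence $\tau(\cL)$ is a function of $\rho(\cL)$ alone. As $\tau$ was already shown to be injective, $\rho$ is injective, and the displayed inversion formulas hold for every lamination.

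The heart of the proof is surjectivity. Given an arbitrary $(a,b)\in\cC_n$, I would use the right-hand sides of \eqref{opopp} and \eqref{op} to \emph{define} numbers $\beta_i$ and $\alpha_i$, and then verify that they are the triangle coordinates of a genuine taut curve system. Integrality and the relations of Lemma~\ref{equalities} are immediate from the definitions: $\beta_i$ is twice an integer, $\tfrac12\beta_k$ is an integer so each $\alpha_i$ is an integer, $\beta_i-\beta_{i+1}=2b_i$, and $\alpha_{2i}+\alpha_{2i-1}$ equals $\beta_i$ or $\beta_{i+1}$ according to the sign of $b_i$. The one substantive point is non-negativity. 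Computing $m_i=\min\{\alpha_{2i-1}-|b_i|,\alpha_{2i}-|b_i|\}$ from the defined coordinates, a short calculation (splitting on the sign of $b_i$ and using $\beta_i=\beta_1-2\sum_{j<i}b_j$) collapses to $m_i=\tfrac12\beta_1-[\,|a_i|+\max(b_i,0)+\sum_{j<i}b_j\,]$, which is exactly $\max_k[\cdots]-[\cdots]_i\ge 0$. This single identity simultaneously gives $m_i\ge 0$ (so the numbers of above and below components from Lemma~\ref{abovebelow} are non-negative) and $\alpha_{2i-1},\alpha_{2i}\ge|b_i|\ge 0$; the remaining bound $\beta_i\ge 0$ follows from \eqref{opopp} by taking $k=i$ in the maximum. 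With non-negative component counts in hand, one assembles $\cL$ region by region exactly as in the injectivity discussion after the definition of $\tau$: in each $S_i$ place $|b_i|$ loop components (left if $b_i<0$, right if $b_i>0$), $\alpha_{2i-1}-|b_i|$ above and $\alpha_{2i}-|b_i|$ below components, and connect them in the unique way compatible with the matching endpoint counts on each $\beta$-arc.

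Two observations finish surjectivity. Because $\beta_1$ is defined as a maximum, some index $k$ attains it, and then the identity above gives $m_k=0$; by the reasoning of the final item of Remarks~\ref{facts} this means $S_k$ carries no above-and-below pair, so the constructed system has no component parallel to $\partial D_n$ and is an honest integral lamination rather than one with a peripheral circle. Finally $(a,b)\neq 0$ forces some $\beta_i>0$, so $\cL$ is non-empty, and by construction its Dynnikov coordinates are $(a,b)$. The main difficulty throughout is this realizability step: verifying that the formula-defined $\alpha_i$ never fall below $|b_i|$, so that the above/below counts stay non-negative and the local pictures can in fact be glued. Once the identity $m_i=\max_k[\cdots]-[\cdots]_i$ is in hand, the remainder is bookkeeping.
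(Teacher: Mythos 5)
Your proposal is correct and follows essentially the same route as the paper: injectivity via Remarks~\ref{facts} together with the injectivity of $\tau$, and surjectivity by defining $(\alpha,\beta)$ from \eqref{opopp} and \eqref{op} and assembling the lamination region by region, with $m_k=0$ at the maximizing index ruling out boundary-parallel components and $(a,b)\neq 0$ ruling out emptiness. If anything you give more detail than the paper on the key realizability step (the identity $m_i=\tfrac{1}{2}\beta_1-\left[\,|a_i|+\max(b_i,0)+\sum_{j<i}b_j\,\right]\ge 0$, which the paper leaves implicit in ``by the construction in Remarks~\ref{facts}''); the one point the paper makes explicitly that you leave tacit is that no constructed component bounds a puncture, which follows, exactly as in the paper, from the fact that every component meets each $S_i$ in one of the four path types.
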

\begin{proof}

$\rho$ is injective: Let $\mathcal{L} \in \mathcal{L}_n$, with $\tau(\mathcal{L})=(\alpha,\beta)$ and $\rho(\mathcal{L})=(a,b)$. We showed in Remarks~\ref{facts} that $(\alpha,\beta)$ must be given by (\ref{opopp}) and (\ref{op}). Hence there is no other $\mathcal{L}'\in\mathcal{L}_n$ with $\rho(\mathcal{L}')=(a,b)$ since the triangle coordinate function is injective.

$\rho$ is surjective: Let $(a,b)\in \mathcal{C}_n$. We will show that $(\alpha,\beta)$ defined by
(\ref{opopp}) and (\ref{op}) are the triangle coordinates of some $\mathcal{L} \in \mathcal{L}_n$ which
has $\rho(\mathcal{L})=(a,b)$. It is clear that if there is some $\mathcal{L}$ with $\tau(\mathcal{L})=(\alpha,\beta)$, then $\rho(\mathcal{L})=(a,b)$. By the construction in Remarks \ref{facts}, it is possible to draw in each $S_i$, $1\leq i\leq n-2$ some non-intersecting path components which intersect $\beta_i, \alpha_{2i-1}, \alpha_{2i}$, and $\beta_{i+1}$ the number of times given by $(\alpha,\beta)$. Joining these components (and completing in the only way in the two end regions) gives a system of mutually disjoint simple closed curves in $D_n$. There are no curves that bound punctures as every path component of a curve system has the property that its intersection with each $S_i$ is of one of the four types by construction, so in particular there can't be a curve that bounds a puncture.
There are no curves parallel to $\partial D_n$ as some $m_i$ is equal to zero. Hence this is an integral lamination which has triangle coordinates $(\alpha,\beta)$ as required.
\end{proof}
\textnormal{In the next section we shall give a formula to compute the geometric intersection number of a given integral lamination $\mathcal{L}\in \mathcal{L}_n$ with a given \emph{relaxed curve}~\cite{wiest} $C_{ij}$ in $D_n$ in terms of triangle coordinates. Furthermore, the formula  can be given in terms of Dynnikov coordinates by Theorem~\ref{lem:dynninvert}.}

\section{Geometric intersection of integral laminations with relaxed curves}

\begin{figure}[h!]
\begin{center}
\psfrag{s1}[tl]{$\scriptstyle{s^a_{i,j}}$} 
\psfrag{s2}[tl]{$\scriptstyle{s^b_{i,j}}$} 
\psfrag{bj+1}[tl]{$\scriptstyle{\beta_{j+1}}$} 
\psfrag{i+1}[tl]{$\scriptstyle{i+1}$} 
\psfrag{j+1}[tl]{$\scriptstyle{j+1}$} 
\psfrag{bi}[tl]{$\scriptstyle{\beta_{i}}$}
\psfrag{bj}[tl]{$\scriptstyle{\beta_{j}}$} 
\psfrag{bi+1}[tl]{$\scriptstyle{\beta_{i+1}}$} 
\psfrag{SS1}[tl]{$S_{i}$} 
\psfrag{SS2}[tl]{$S_{j}$} 
  \includegraphics[width=0.7\textwidth]{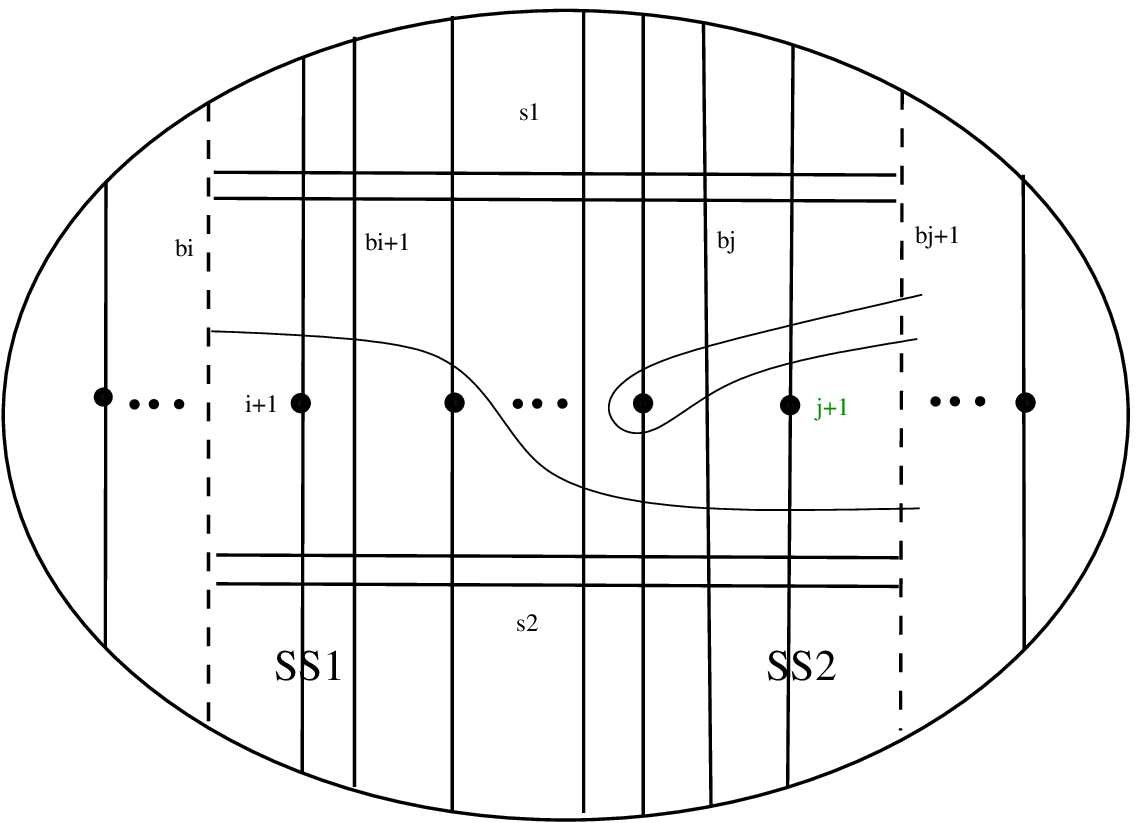}
%%\resizebox{0.5\textwidth, angle=-90}{!}{\includegraphics{famcurves}}
\caption{$s^a_{i,j}$ and $s^b_{i,j}$} \label{straightpath}
\end{center}
\end{figure}

\textnormal{
Let $S_{i,j}=\displaystyle\bigcup_{i\leq k\leq j} S_k$. A \emph{path component} of $L$ in $S_{i,j}$ is a component of $L\cap S_{i,j}$. An \emph{above component} in $S_{i,j}$ has end points on $\beta_{i}$ and $\beta_{j+1}$ and does not intersect any $\alpha_{2k}$ with $i\leq k\leq j$. A \emph{below component} in $S_{i,j}$ has end points on $\beta_{i}$ and $\beta_{j+1}$ and does not intersect any $\alpha_{2k-1}$ with $i\leq k\leq j$ (Figure~\ref{straightpath}).
Using Lemma \ref{abovebelow} one can compute the number of above and below components in $S_{i,j}$.}

\begin{lem}\label{abovebelow2}
The number of above and below components in $S_{i,j}$ is given by
$$s^a_{i,j}=\displaystyle \min_{i\leq k \leq j}\{\alpha_{2k-1}-|b_k|\}\quad \text{and}\quad s^b_{i,j}=\displaystyle \min_{i\leq k \leq j}\{\alpha_{2k}-|b_k|\}$$ respectively. Therefore the sum $s_{i,j}=s^a_{i,j}+s^b_{i,j}$ gives the number of above and below components in $S_{i,j}$.
\end{lem}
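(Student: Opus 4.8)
The plan is to prove the formula for $s^a_{i,j}$ in two halves, an easy upper bound and a harder lower bound, and then obtain $s^b_{i,j}$ by the mirror-image argument and $s_{i,j}$ by addition. The starting point is that an above component in $S_{i,j}$ restricts, in each strip $S_k$ with $i\le k\le j$, to a component running from $\beta_k$ to $\beta_{k+1}$ without meeting $\alpha_{2k}$, i.e.\ to an above component of $S_k$ in the sense of Lemma~\ref{abovebelow}; conversely, an above component of $S_{i,j}$ is exactly a maximal chain of above components of the single strips $S_i,\dots,S_j$ glued together along the intermediate arcs $\beta_{i+1},\dots,\beta_{j}$. For the upper bound I would note that distinct above components of $S_{i,j}$ are disjoint, so their restrictions to a fixed $S_k$ are distinct above components of $S_k$. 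Hence the number of above components in $S_{i,j}$ is at most $\alpha_{2k-1}-|b_k|$ for every $k$ with $i\le k\le j$, and taking the minimum gives $s^a_{i,j}\le \min_{i\le k\le j}\{\alpha_{2k-1}-|b_k|\}$.

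The content lies in the reverse inequality, which I would prove by induction on $j-i$, the base case $j=i$ being Lemma~\ref{abovebelow} itself. For the inductive step it suffices to establish the gluing identity
\begin{align*}
s^a_{i,j+1}=\min\{s^a_{i,j},\ \alpha_{2j+1}-|b_{j+1}|\},
\end{align*}
since unrolling it together with the base case yields the asserted minimum. An above component of $S_{i,j+1}$ is an above component of $S_{i,j}$ whose endpoint on $\beta_{j+1}$ is simultaneously the endpoint of an above component of $S_{j+1}$, so the identity amounts to the claim that the above components of $S_{i,j}$ and those of $S_{j+1}$ meet $\beta_{j+1}$ in a maximally overlapping way.

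To prove this I would analyse the linear order in which the taut system $L$ meets $\beta_{j+1}$. The key point, read off from the local pictures (Figures~\ref{tri22} and~\ref{tri21}) together with Lemma~\ref{equalities}, is that the intersection points of $L$ with $\beta_{j+1}$ occur in a fixed order in which the endpoints of above components form a single block adjacent to the distinguished (upper) endpoint of $\beta_{j+1}$, with below-component endpoints at the opposite end and any loop endpoints in between; this is forced because $L$ is embedded and taut. Since this ordering is a property of the arc $\beta_{j+1}$ itself, it is the same whether the points are viewed from the $S_{i,j}$ side or the $S_{j+1}$ side. Consequently the above-component endpoints of $S_{i,j}$ are the $s^a_{i,j}$ points nearest the distinguished endpoint, while those of $S_{j+1}$ are the $\alpha_{2j+1}-|b_{j+1}|$ points nearest that same endpoint; the points common to both blocks are exactly $\min\{s^a_{i,j},\ \alpha_{2j+1}-|b_{j+1}|\}$ in number, and each such point is an above component on both sides and hence continues to an above component of $S_{i,j+1}$. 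This gives the gluing identity and completes the induction.

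The main obstacle is the ordering claim in the preceding paragraph: I expect the real work to be in verifying, from the embedded and taut structure encoded in Lemma~\ref{equalities} and Figures~\ref{tri22}--\ref{tri21}, that above-component endpoints genuinely form a contiguous extremal block on $\beta_{j+1}$ consistently from both sides, so that no embedded disjoint configuration can let an above component turn back into a loop or below component while an above component on the other side is left unmatched. Once this is in place, the below-component count $s^b_{i,j}=\min_{i\le k\le j}\{\alpha_{2k}-|b_k|\}$ follows by the symmetric argument, interchanging the roles of $\alpha_{2k-1}$ and $\alpha_{2k}$ and of the two ends of each $\beta_{k+1}$, and $s_{i,j}=s^a_{i,j}+s^b_{i,j}$ then holds by definition.
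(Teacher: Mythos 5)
Your proof is correct and takes essentially the same approach as the paper, which likewise reduces to the single-strip counts of Lemma~\ref{abovebelow} and obtains the two counts as the minima $s^a_{i,j}=\min_{i\le k\le j}\{s^a_k\}$ and $s^b_{i,j}=\min_{i\le k\le j}\{s^b_k\}$. The paper simply asserts these minima without further argument, so your upper bound, inductive gluing identity and ordering claim along $\beta_{j+1}$ (which does hold: an arc starting above an above component is trapped in a puncture-free region and, by tautness, cannot return to $\beta_{j+1}$, so the above-component endpoints form a contiguous extremal block from both sides) merely make explicit the step the paper treats as obvious.
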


\begin{proof}
For each $1\leq k \leq n-2$, $s^a_{k}=\alpha_{2k-1}-|b_k|$ and $s^b_{k}=\alpha_{2k}-|b_k|$ by Lemma~\ref{abovebelow}.

Then  $s^b_{i,j}= \displaystyle\min_{i\leq k\leq j}\{s^b_k\}$ and $s^a_{i,j}= \displaystyle\min_{i\leq k\leq j}\{s^a_k\}$.
 Hence,

$$s_{i,j}=\displaystyle\min_{i\leq k\leq j}\{s^a_k\}+\displaystyle\min_{i\leq k\leq j}\{s^b_k\}.$$

\end{proof}

\begin{remark}\label{toplampath}
\textnormal{
Notice that the number of path components in $S_{i,j}$ which are not simple closed curves is given by $\frac{\beta_i+\beta_{j+1}}{2}$ (Figure \ref{straightpath}).}
\end{remark}

\textnormal{Given an essential simple closed curve $C$ in $D_n$, $\left\|C\right\|$ denotes the minimum number of intersections of $C$ with the $x$-axis. Then, given $\mathcal{L}\in \mathcal{L}_n$, the \emph{norm }of $\mathcal{L}$ is defined as
$$\left\|\mathcal{L}\right\|=\displaystyle\sum_{i}{\left\|C_i\right\|}$$
where $\{C_i\}$ are connected components of $\mathcal{L}$.
	We say that $C_i$ is \emph{relaxed} if $\left\|C_i\right\|=2$. Then, $\mathcal{L}$ is \emph{relaxed} if each of its connected components $C_i$ is relaxed \cite{wiest}.}

\begin{figure}[h!]
\begin{center}
\includegraphics[width=0.4\textwidth]{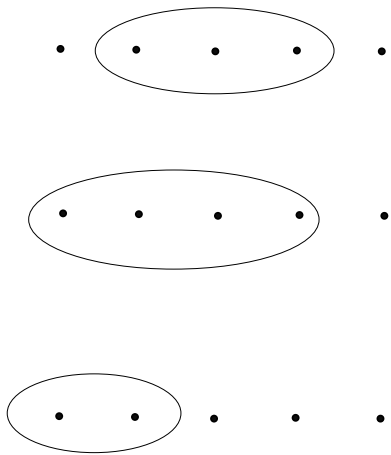}
%\resizebox{0.5\textwidth, angle=-90}{!}{\includegraphics{famcurves}}
\caption{Relaxed curves $C_{24}$,~$C_{14}$,~$C_{12}$ in $D_5$ from top to bottom} \label{relaxed}
\end{center}
\end{figure}

	\textnormal{For $1\leq i<j<n$ or $1<i<j\leq n$, $C_{ij}\in \mathcal{L}_n$ denotes the isotopy class of relaxed curves in $D_n$ which bound a disk containing the set of punctures $\{i,i+1,\dots,j\}$.}

\textnormal{Hence, we observe that $$\rho(C_{ij})=(0,\dots,0,b_1,\dots,b_{n-2})$$ where $b_{i-1}=-1$ if $i>1$, $b_{j-1}=1$ if $j<n$ and $b_k=0$ for all other cases. Figure~\ref{relaxed} shows some examples of relaxed curves.}
\begin{remark}\label{remark10}

It is always possible to turn a non-relaxed integral lamination $\mathcal{L}\in \mathcal{L}_n$ into one which is relaxed. That is to say, for any $\mathcal{L}\in \mathcal{L}_n$ there exists a braid $\beta\in B_n$ such that $\beta(\mathcal{L})$ is relaxed. An algorithm to accomplish this is given in \cite{wiest}.

\end{remark}

	\textnormal{Given $\mathcal{L}_1\in \mathcal{L}_n$ and $\mathcal{L}_2\in \mathcal{L}_n$  which are not relaxed, the geometric intersection number $i(\mathcal{L}_1,\mathcal{L}_2)$  can be computed by first relaxing one of the integral laminations with an $n$-braid $\beta$ by the algorithm described in \cite{wiest} and then computing $i(\beta(\mathcal{L}_1),\beta(\mathcal{L}_2))$ ~(note that $i(\mathcal{L}_1,\mathcal{L}_2)=i(\beta(\mathcal{L}_1),\beta(\mathcal{L}_2))$ since geometric intersection number is preserved under homeomorphisms). Hence, to compute $i(\mathcal{L}_1,\mathcal{L}_2)$, it is sufficient to find a formula that gives $i(C_{ij},\mathcal{L})$ for a given $\mathcal{L}\in \mathcal{L}_n$.}

\begin{thm}\label{thm:formula}
Given an integral lamination $\mathcal{L}\in \cL_n$ with triangle coordinates $(\alpha,\beta)$ and $C_{ij}\in \mathcal{L}_n$, $i(\mathcal{L},C_{ij})$ is given by,

\begin{align}\label{formula}
i(\mathcal{L},C_{ij})=
\beta_{i-1}+\beta_{j}-2s_{i-1,j-1}.
\end{align}
where $s_{i,j}$ is defined as in Lemma \ref{abovebelow2}.
\end{thm}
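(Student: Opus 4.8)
The plan is to localize the whole computation inside the subsurface $S_{i-1,j-1}=\bigcup_{k=i-1}^{j-1}S_k$. This is the natural place to work, because it is exactly the union of strips carrying the punctures $\{i,\dots,j\}$ that $C_{ij}$ encloses, and its two bounding arcs are $\beta_{i-1}$ on the left and $\beta_{(j-1)+1}=\beta_{j}$ on the right. First I would fix a taut representative $L$ of $\mathcal{L}$ together with a representative of $C_{ij}$ lying in $S_{i-1,j-1}$ and hugging the punctures, so that $C_{ij}$ meets each interior arc $\beta_{i},\dots,\beta_{j-1}$ twice and is disjoint from the outer arcs $\beta_{i-1}$ and $\beta_{j}$. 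Since every endpoint of a component of $L\cap S_{i-1,j-1}$ lies on $\beta_{i-1}\cup\beta_{j}$, each such path component is an arc, and by Remark~\ref{toplampath} there are exactly $\tfrac{\beta_{i-1}+\beta_{j}}{2}$ of them.

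Next I would classify these arcs by where their endpoints sit. Call an arc a \emph{through} arc if it joins $\beta_{i-1}$ to $\beta_{j}$; by definition these are precisely the above and below components of $S_{i-1,j-1}$, so Lemma~\ref{abovebelow2} counts them as $s_{i-1,j-1}=s^a_{i-1,j-1}+s^b_{i-1,j-1}$. Every other arc is a \emph{return}, with both endpoints on $\beta_{i-1}$ or both on $\beta_{j}$. Counting endpoints on each wall gives
\begin{align*}
\beta_{i-1}=s^a_{i-1,j-1}+s^b_{i-1,j-1}+2r_\ell,\qquad \beta_{j}=s^a_{i-1,j-1}+s^b_{i-1,j-1}+2r_r,
\end{align*}
where $r_\ell,r_r$ are the numbers of returns on $\beta_{i-1}$ and $\beta_{j}$. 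Adding these equations shows that the total number of returns satisfies $2(r_\ell+r_r)=\beta_{i-1}+\beta_{j}-2s_{i-1,j-1}$, which is exactly the right-hand side of~(\ref{formula}).

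It then remains to establish the geometric claim that, in minimal position, $i(\mathcal{L},C_{ij})$ equals twice the number of returns. The idea is that each through arc can be isotoped off $C_{ij}$ — an above arc pushed over the top of the enclosed punctures and a below arc pushed under them, so that both stay outside the disk bounded by $C_{ij}$ — and hence contributes $0$ intersections; while each return arc is forced to meet $C_{ij}$ exactly twice. For a return on $\beta_{i-1}$, I would argue that its two endpoints lie outside the disk bounded by $C_{ij}$, whereas the arc must encircle at least one enclosed puncture, so it enters and leaves that disk and therefore crosses $C_{ij}$ at least twice; the same argument handles returns on $\beta_{j}$. Summing contributions yields $i(\mathcal{L},C_{ij})=2(r_\ell+r_r)$, and combined with the identity above this gives~(\ref{formula}).

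The hard part will be the last step: verifying that the chosen representatives are genuinely in minimal position, i.e. that no bigons survive between $L$ and $C_{ij}$. I would settle this by describing the configuration explicitly strip by strip and invoking the bigon criterion, checking that a through arc never co-bounds a bigon with $C_{ij}$ and that the two crossings produced by a return arc are essential because the arc actually separates an enclosed puncture from the outer boundary. Some additional care is needed in the boundary cases $i=1$ and $j=n$, where one of $\beta_{i-1}$, $\beta_{j}$ degenerates into an end region $\Delta_0$ or $\Delta_{2n-3}$ and the corresponding term is to be read as $0$.
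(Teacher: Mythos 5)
Your overall strategy --- fix a taut representative, localize in $S_{i-1,j-1}$, count endpoints on $\beta_{i-1}\cup\beta_j$, push the removable arcs off $C_{ij}$, and certify minimality with the bigon criterion --- is essentially the paper's, but there is a genuine gap in your classification of the path components. You assert that the arcs joining $\beta_{i-1}$ to $\beta_j$ ``are precisely the above and below components of $S_{i-1,j-1}$''. This is false: by definition an above (resp.\ below) component of $S_{i-1,j-1}$ must avoid \emph{every} $\alpha_{2k}$ (resp.\ every $\alpha_{2k-1}$) with $i-1\le k\le j-1$, i.e.\ it must stay on one side of \emph{all} the enclosed punctures, and $s_{i-1,j-1}$ counts only these. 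A through arc may instead be \emph{mixed}, passing above some enclosed punctures and below others. For a concrete instance take $n=4$ and let $L$ be the boundary of a regular neighbourhood of an arc from puncture $1$ to puncture $4$ passing above puncture $2$ and below puncture $3$; this is taut, with $(\alpha_1,\alpha_2,\alpha_3,\alpha_4;\beta_1,\beta_2,\beta_3)=(2,0,0,2;2,2,2)$, so $b_1=b_2=0$ and $s_{1,2}=\min\{2,0\}+\min\{0,2\}=0$, while $L\cap S_{1,2}$ consists of two through arcs and no returns. Your endpoint identities $\beta_{i-1}=s_{i-1,j-1}+2r_\ell$ and $\beta_j=s_{i-1,j-1}+2r_r$ then read $2=0$, and your isotopy claim fails as well: neither mixed arc can be pushed off $C_{23}$, since pushing it out over the top runs into puncture $3$ and pushing it out underneath runs into puncture $2$; each contributes two essential crossings, and indeed $i(L,C_{23})=4=\beta_1+\beta_3-2s_{1,2}$.

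The gap is repairable, and the repair lands you on the paper's own argument. Writing $m$ for the number of mixed through arcs, the correct endpoint counts are $\beta_{i-1}=s_{i-1,j-1}+m+2r_\ell$ and $\beta_j=s_{i-1,j-1}+m+2r_r$; above and below components contribute $0$ as you argue, while each mixed arc, like each return, meets $C_{ij}$ exactly twice in minimal position (inside the disk bounded by $C_{ij}$ a mixed arc has an enclosed puncture on each side, so neither crossing bounds a bigon), giving $i(\mathcal{L},C_{ij})=2(m+r_\ell+r_r)=\beta_{i-1}+\beta_j-2s_{i-1,j-1}$. The paper sidesteps the faulty identification altogether: it takes a representative $\gamma_{ij}$ built from subarcs of $\beta_{i-1}$, $\beta_j$ and horizontal arcs whose disk contains every path component of $L\cap S_{i-1,j-1}$, starts from the full crossing count $\beta_{i-1}+\beta_j$, and subtracts two for each above or below component, asserting that these are the \emph{only} components that can be isotoped off --- so returns and mixed arcs alike retain their two crossings, with no need for your (false) identity. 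Two minor points: a component of $L\cap S_{i-1,j-1}$ need not be an arc (a closed component of $L$ can lie wholly inside the strip), but this is harmless since such components contribute no intersections and Remark~\ref{toplampath} counts only the non-closed ones; and your convention for the degenerate cases $i=1$, $j=n$ is sensible, though the paper leaves it implicit.
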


\begin{figure}[h!]
\centering
\psfrag{i+1}[tl]{$\scriptstyle{i-1}$} 
\psfrag{j+1}[tl]{$\scriptstyle{j+1}$} 
\psfrag{gammaj}[tl]{$\scriptstyle{\gamma_{ij}}$} 
\psfrag{betai-1}[tl]{$\scriptstyle{\beta_{i-1}}$} 
\psfrag{betaj}[tl]{$\scriptstyle{\beta_j}$} 
\includegraphics[width=0.9\textwidth]{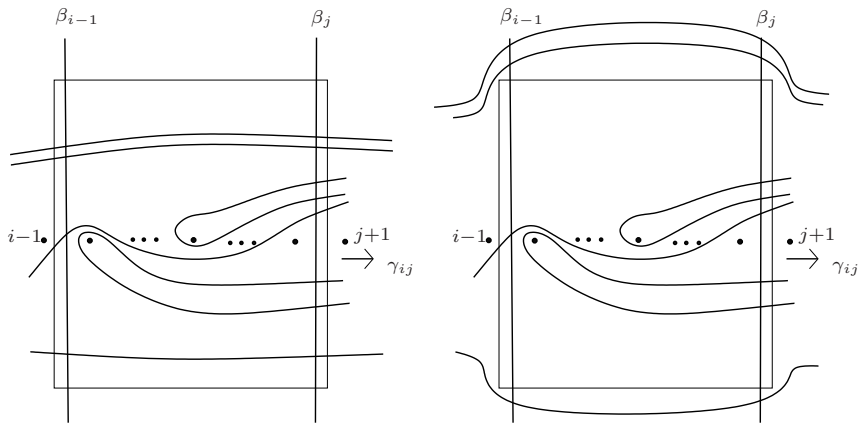}
%\resizebox{0.5\textwidth, angle=-90}{!}{\includegraphics{famcurves}}
\caption{Proof of Theorem \ref{thm:formula}}\label{straightcurve}
\end{figure}

\begin{proof}
Take a taut representative $L\in \mathcal{L}$ and a representative $\gamma_{ij}$ of $C_{ij}$ which is composed of subarcs of $\beta_{i-1}$ and $\beta_j$ and horizontal arcs which are such that the disk bounded by $\gamma_{ij}$ contains all of the path components of $L$ in $S_{i-1,j-1}$.
The number of intersections of $\gamma_{ij}$ with the path components of $L$ in $S_{i-1,j-1}$ is given by $\beta_{i-1}+\beta_j$ (See Remark \ref{toplampath}). This number can be minimized by subtracting from it the number of path components which can be isotoped so that they do not intersect $\gamma_{ij}$ any more. Such path components can only be above and below components in $S_{i-1,j-1}$ (Figure~\ref{straightcurve}). Since, each above and below component intersects $\gamma_{ij}$ twice, we have that

\begin{align*}
i(\mathcal{L},C_{ij})=\beta_{i-1}+\beta_{j}-2s_{i-1,j-1}.
\end{align*}
\end{proof}
\textnormal{
Notice that the formulae  given above can be written using Dynnikov coordinates since one can write each $\alpha_i$~and~$\beta_i$ in terms of $a_i$~and~$b_i$ by Theorem~\ref{lem:dynninvert}.} 

\begin{figure}[h!]
\begin{minipage}[b]{0.5\linewidth}
\centering
\psfrag{4}[tl]{$\scriptstyle{4}$} 
\psfrag{8}[tl]{$\scriptstyle{8}$} 
\psfrag{2}[tl]{$\scriptstyle{2}$}
\psfrag{5}[tl]{$\scriptstyle{5}$}
\psfrag{3}[tl]{$\scriptstyle{3}$}
\psfrag{6}[tl]{$\scriptstyle{6}$}
\includegraphics[width=0.9\textwidth]{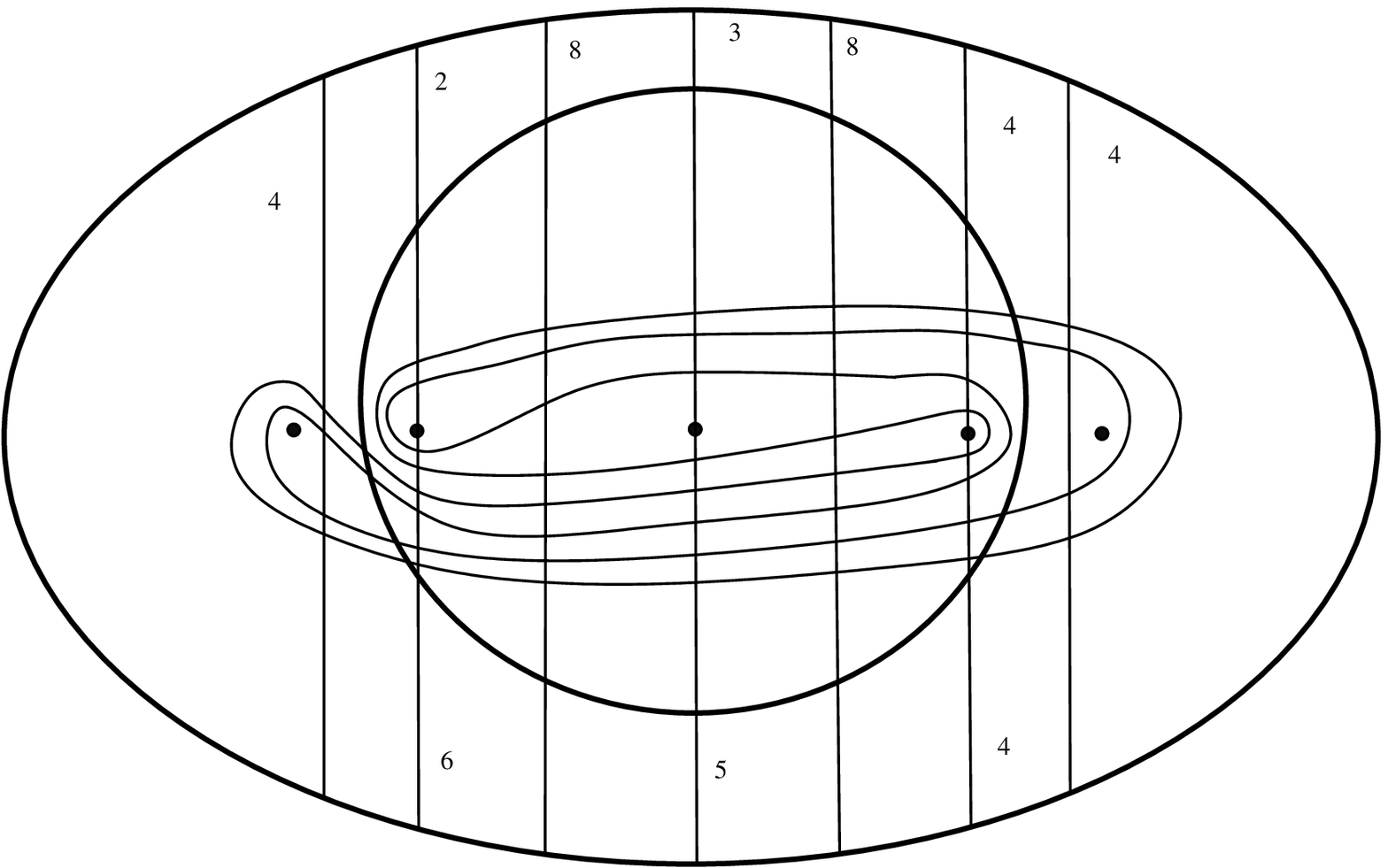}
%\resizebox{0.5\textwidth, angle=-90}{!}{\includegraphics{famcurves}}
\caption{$i(\mathcal{L},C_{ij})=4$} \label{geometricint}
\end{minipage}
\hspace{0.5cm}
\begin{minipage}[b]{0.5\linewidth}
\centering
\psfrag{4}[tl]{$\scriptstyle{4}$} 
\psfrag{8}[tl]{$\scriptstyle{8}$} 
\psfrag{2}[tl]{$\scriptstyle{2}$}
\psfrag{5}[tl]{$\scriptstyle{5}$}
\psfrag{3}[tl]{$\scriptstyle{3}$}
\psfrag{6}[tl]{$\scriptstyle{4}$}
\includegraphics[width=0.9\textwidth]{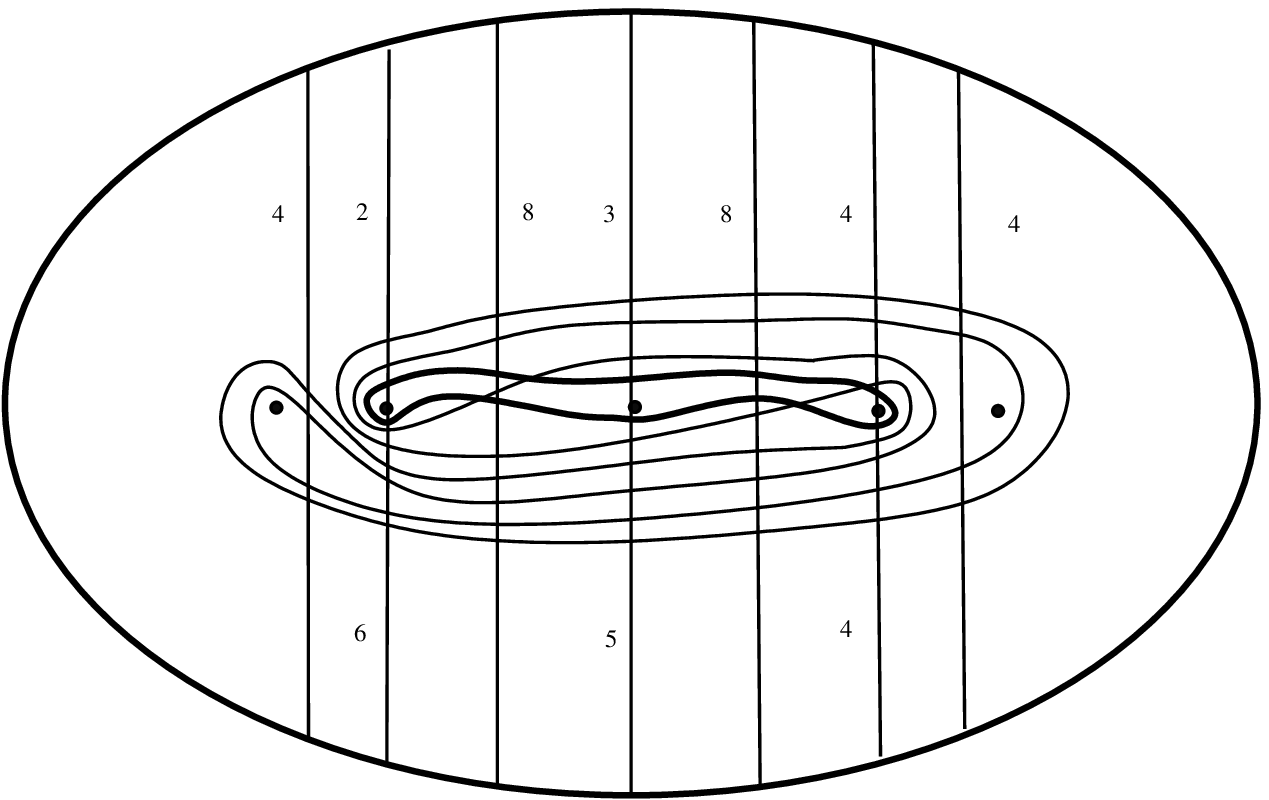}
%\resizebox{0.5\textwidth, angle=-90}{!}{\includegraphics{famcurves}}
\caption{$i(\mathcal{L},C_{ij})=4$} \label{geometricint2}
\end{minipage}
\end{figure}

\begin{example}

Let $\rho(\mathcal{L})=(2,1,0,-2,0,2)$ (Figure \ref{munu2}). We want to find $i(C_{24},\mathcal{L})$. Using the formula (\ref{formula}) we get,

\begin{align*}
i(\mathcal{L},C_{24})=\beta_{1}+\beta_{4}-2s_{1,3}.
\end{align*}
 From Theorem \ref{lem:dynninvert}, we know that $$(\alpha_1,\alpha_2,\alpha_3,\alpha_4,\alpha_5,\alpha_6;\beta_1,\beta_2,\beta_3,\beta_4)=(2,6,3,5,4,4;4,8,8,4).$$
From Lemma \ref{abovebelow2} we have, 
$$s^a_{1,3}=\displaystyle \min_{1\leq k \leq 3}\{\alpha_{2k-1}-|b_k|\}\quad \text{and}\quad s^b_{1,3}=\displaystyle \min_{1\leq k \leq 3}\{\alpha_{2k}-|b_k|\}$$
Therefore, 
\begin{align*}
s^a_{1,3}&=\displaystyle \min\{\alpha_1-|b_1|,\alpha_3-|b_2|,\alpha_5-|b_3|\}\\
&=\displaystyle \min\{2-|-2|,3-0,4-2\}=0
\intertext{and}\\
s^b_{1,3}&=\displaystyle \min\{\alpha_2-|b_1|,\alpha_4-|b_2|,\alpha_6-|b_3|\}\\
&=\displaystyle \min\{6-|-2|,5-0,4-2\}=2\\
\end{align*}
So the number of above and below components in $S_{1,3}$ equals $s^a_{1,3}+s^b_{1,3}=2$. 

Therefore,

\begin{align*}
i(\mathcal{L},C_{24})=\beta_{1}+\beta_{4}-2s_{1,3}=4+4-2\times 2=4
\end{align*}

See Figure~\ref{geometricint} and Figure~\ref{geometricint2}.
\end{example}
\begin{remark}

Observe that if $\mathcal{L}_1=\displaystyle \bigcup C_{ij}\in \cL_n$ and $\mathcal{L}_2\in \mathcal{L}_n$, then $$i(\mathcal{L}_1,\mathcal{L}_2)=\displaystyle\sum i(C_{ij},\mathcal{L}_2)$$ since the above construction can be carried out for each $C_{ij}$ in turn, working from the inside out.
\end{remark}
The next result gives the geometric intersection number of two arbitrary integral laminations on a 3-times punctured disk using Theorem \ref{thm:formula}. Again, we note that the formula  can be given in terms of Dynnikov coordinates by Theorem~\ref{lem:dynninvert}.
\begin{corollary}\label{kkk}
Let $\mathcal{L}_1\in \mathcal{L}_3$ and $\mathcal{L}_2\in \mathcal{L}_3$ have triangle coordinates $(\alpha^1,\beta^1)$ and $(\alpha^2,\beta^2)$ with Dynnikov coordinates $(a^1,b^1)$ and $(a^2,b^2)$ respectively.  Then, the geometric intersection number $i(\mathcal{L}_1,\mathcal{L}_2)$ is given by 
\begin{align}\label{xxx}
i(\mathcal{L}_1,\mathcal{L}_2)=
\begin{cases}
\alpha^1_{2}\alpha^2_{1}+\alpha^1_{1}\alpha^2_{2}~~;&\text{if}~~\epsilon^1\epsilon^2=-1\\
\left|\alpha^1_{2}\alpha^2_{1}-\alpha^1_{1}\alpha^2_{2}\right|;&\text{if}~~\epsilon^1\epsilon^2=+1
\end{cases}
\end{align}
where $\epsilon^1=\sgn(b_1^1)$ and $\epsilon^2=\sgn(b_1^2)$.
\end{corollary}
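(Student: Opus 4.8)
The plan is to count $L_1\cap L_2$ directly for simultaneously taut representatives, bootstrapping from the case in which one of the laminations is relaxed. First I would settle the \emph{base case} where $\mathcal{L}_2$ is relaxed, i.e.\ $\rho(\mathcal{L}_2)=(0,b_1^2)$ with $b_1^2\neq0$, so that $\mathcal{L}_2$ consists of $|b_1^2|$ parallel copies of $C_{12}$ (if $\epsilon^2=+1$) or of $C_{23}$ (if $\epsilon^2=-1$), and in either case $\alpha_1^2=\alpha_2^2=|b_1^2|$. Applying Theorem~\ref{thm:formula} to $\mathcal{L}_1$ and a single copy, with the boundary convention $\beta_0=\beta_n=0$ (so the spanning count $s$ across an end region vanishes), gives $i(\mathcal{L}_1,C_{12})=\beta_2^1$ and $i(\mathcal{L}_1,C_{23})=\beta_1^1$; summing over the copies via the additivity of $i$ over disjoint components yields $|b_1^2|\beta_2^1$ respectively $|b_1^2|\beta_1^1$. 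A short check with the $n=3$ inversion formulas~\eqref{opopp}--\eqref{op}, which give $\beta_1^1,\beta_2^1\in\{\alpha_1^1+\alpha_2^1,\ |\alpha_2^1-\alpha_1^1|\}$ according to the sign of $\epsilon^1$, shows this already equals the claimed right-hand side in the base case.

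For the general case I would fix taut representatives $L_1\in\mathcal{L}_1$, $L_2\in\mathcal{L}_2$ that are simultaneously in minimal position and analyse $L_1\cap L_2$ in the decomposition of $D_3$ into the strip $S_1$ around the middle puncture and the two end regions $\Delta_0,\Delta_3$ around punctures $1$ and $3$. Since the components of an integral lamination on $D_3$ are parallel copies of a single curve, the additivity of $i$ over parallel copies reduces the bookkeeping to two families of arcs winding about the middle puncture. Once the loops in $\Delta_0$ and $\Delta_3$ are made concentric they can be pushed disjoint, so the essential crossings localise near puncture $2$: a strand of one lamination passing \emph{above} the puncture (an $\alpha_1$-crossing) must meet a strand of the other passing \emph{below} it (an $\alpha_2$-crossing). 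Counting these forced crossings is what produces the two products $\alpha_2^1\alpha_1^2$ and $\alpha_1^1\alpha_2^2$.

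The sign is governed by whether the two laminations wind the same way around the middle puncture, which is exactly what $\epsilon^1\epsilon^2$ records. When $\epsilon^1\epsilon^2=-1$ the loop components of $L_1$ and $L_2$ spiral in opposite senses, the two families cross transversally with no removable bigons, and the products add, giving $i=\alpha_2^1\alpha_1^2+\alpha_1^1\alpha_2^2$. When $\epsilon^1\epsilon^2=+1$ the windings agree, so a maximal subfamily of strands runs parallel and can be isotoped off one another; this cancels $\min(\alpha_2^1\alpha_1^2,\alpha_1^1\alpha_2^2)$ crossings and leaves $|\alpha_2^1\alpha_1^2-\alpha_1^1\alpha_2^2|$.

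The main obstacle is precisely the minimality argument in the case $\epsilon^1\epsilon^2=+1$: one must prove that \emph{exactly} the asserted number of bigons can be removed and that the resulting configuration is taut, so that the count is the genuine geometric intersection number and not merely an upper bound. I would handle this with the bigon criterion, isotoping each $L_k$ into the standard taut form dictated by its triangle coordinates and tracking how the above, below and loop components of $L_1$ thread through those of $L_2$ region by region; the end regions contribute nothing once their loops are concentric, which is what confines the count to a neighbourhood of puncture $2$. Matching the add-versus-subtract dichotomy to the value of $\epsilon^1\epsilon^2$ is the delicate point, and it is where I expect to spend most of the care.
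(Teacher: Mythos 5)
Your base case is essentially the paper's: the published proof likewise verifies \eqref{xxx} when one lamination is relaxed, via $i(\mathcal{L},C_{12})=\beta_2^1$ and $i(\mathcal{L},C_{23})=\beta_1^1$ combined with the $n=3$ identities $\{\beta_1^1,\beta_2^1\}=\{\alpha_1^1+\alpha_2^1,\,|\alpha_2^1-\alpha_1^1|\}$, sorted according to $\epsilon^1$ (these follow from Lemma~\ref{equalities} together with $m_1=0$, which gives $\min(\alpha_1,\alpha_2)=|b_1|$); your boundary convention $\beta_0=\beta_n=0$ in Theorem~\ref{thm:formula} and the explicit $|b_1^2|$-parallel-copies bookkeeping are slightly more careful than what the paper writes down. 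The genuine divergence is in the general case. The paper counts nothing further: it recalls that any $\mathcal{L}_1\in\mathcal{L}_3$ can be relaxed by some $\beta\in B_3$ and that $i(\mathcal{L}_1,\mathcal{L}_2)=i(\beta(\mathcal{L}_1),\beta(\mathcal{L}_2))$, reducing to the base case in two lines, whereas you propose a direct minimal-position count near the middle puncture, with a bigon-criterion argument controlling the cancellation when $\epsilon^1\epsilon^2=+1$. What each route buys: the exact-cancellation lemma you correctly flag as the delicate point (that precisely $\min(\alpha_2^1\alpha_1^2,\alpha_1^1\alpha_2^2)$ pairs of crossings are removable and the result is taut) is exactly what the braid trick lets the paper avoid, so your route is substantially more work; on the other hand, your count, if completed, proves \eqref{xxx} in the \emph{original} coordinates, while the paper's reduction, read strictly, computes $i$ in the coordinates of $\beta(\mathcal{L}_1)$ and $\beta(\mathcal{L}_2)$ and therefore tacitly uses that the right-hand side of \eqref{xxx} is invariant under the diagonal $B_3$-action (a fact one would verify on the generators $\sigma_1,\sigma_2$ via the update rules) --- a step your self-contained approach renders unnecessary. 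One small point common to both arguments: when $b_1^1=0$ or $b_1^2=0$ the dichotomy $\epsilon^1\epsilon^2=\pm1$ formally covers neither case, but harmlessly so, since $m_1=0$ then forces one of $\alpha_1^k,\alpha_2^k$ to vanish for that lamination, and the two expressions in \eqref{xxx} coincide.
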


\begin{figure}[h!]
\centering
\psfrag{a}[tl]{$\scriptstyle{i(C_{12},\mathcal{L}_1)=\beta_2}=\left|\alpha_2-\alpha_1\right|$} 
\psfrag{b}[tl]{$\scriptstyle{i(C_{12},\mathcal{L}_2)=\beta_2}=\alpha_1+\alpha_2$} 
\psfrag{c}[tl]{$\scriptstyle{i(C_{23},\mathcal{L}_1)=\beta_1}=\alpha_1+\alpha_2$} 
\psfrag{d}[tl]{$\scriptstyle{i(C_{23},\mathcal{L}_2)=\beta_1}=\left|\alpha_2-\alpha_1\right|$}
\includegraphics[width=0.7\textwidth]{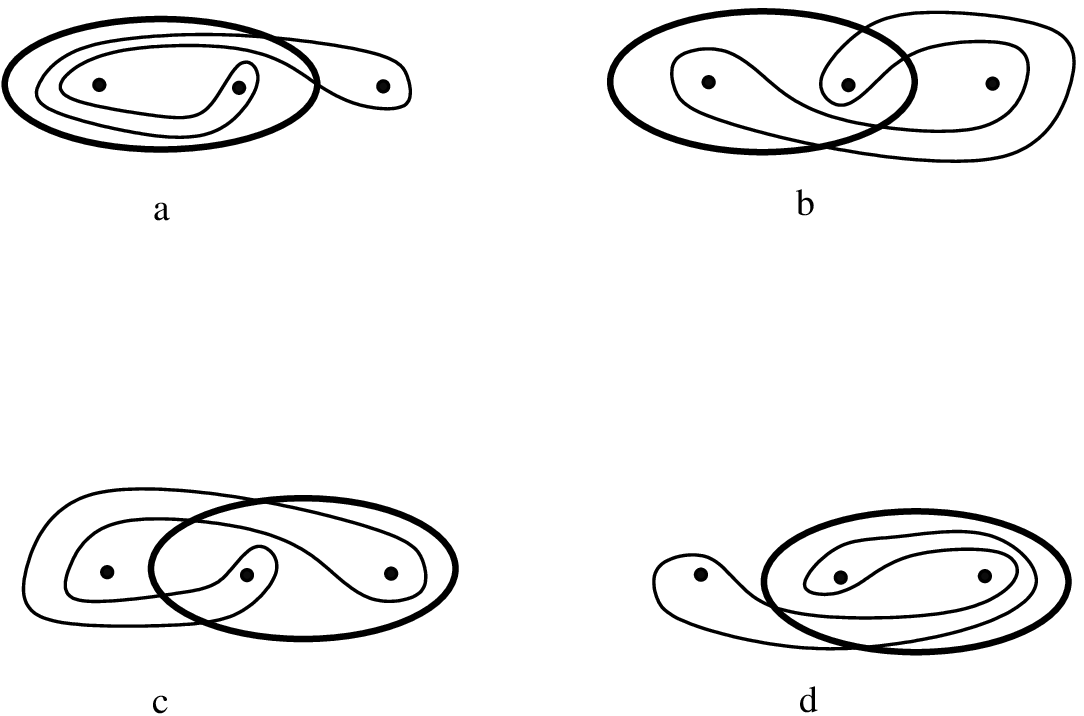}
%\resizebox{0.5\textwidth, angle=-90}{!}{\includegraphics{famcurves}}
\caption{$i(C_{ij},\mathcal{L})$ on $D_3$} \label{3puncturedcase}
\end{figure}
\begin{proof}

We first observe that the only relaxed curves in $D_3$ are $C_{12}$ and $C_{23}$. We also note that, given $\mathcal{L}\in \mathcal{L}_3$,  $i(\mathcal{L},C_{12})=\beta_2$ and $i(\mathcal{L},C_{23})=\beta_1$. Hence the formula~(\ref{xxx}) is verified for $i(\mathcal{L},C_{ij})$ by Lemma~\ref{abovebelow}. See Figure \ref{3puncturedcase}.

For the general case, we recall that $B_3$ acts on both $\mathcal{L}$ and $C_{ij}$ and there exists $\beta\in B_3$ such that $\beta(\mathcal{L})$ is either $C_{12}$ or $C_{23}$. 
Since the geometric intersection number is preserved under homeomorphisms, it follows that the formula (\ref{xxx}) is verified for $i(\mathcal{L}_1,\mathcal{L}_2)$ for any $\mathcal{L}_1\in\mathcal{L}_3$ and $\mathcal{L}_2\in \mathcal{L}_3$.
\end{proof}

\begin{example}
Let $\mathcal{L}_1$ and $\mathcal{L}_2$ be the integral laminations depicted in Figure~\ref{3puncturedcases} and; $(\alpha^1,\beta^1)$ and $(\alpha^2,\beta^2)$ be their triangle coordinates respectively.  We observe that $(\alpha^1_1,\alpha^1_2)=(3,1)$ and $(\alpha^2_1,\alpha^2_2)=(4,2)$. Since $\mathcal{L}_1$  has right loop components and $\mathcal{L}_2$ has left loop components, $\epsilon^1\epsilon^2=-1$ and hence by Corollary \ref{kkk}, $i(\mathcal{L}_1,\mathcal{L}_2)$ is given by $$i(\mathcal{L}_1,\mathcal{L}_2)=\alpha^1_{2}\alpha^2_{1}+\alpha^1_{1}\alpha^2_{2}=1\times 4+3\times 2=10.$$
\end{example}

\begin{figure}[h!]
\centering
%\psfrag{a}[tl]{$\scriptstyle{}$} 
\psfrag{l1}[tl]{$\scriptstyle{\mathcal{L}_1}$} 
\psfrag{l2}[tl]{$\scriptstyle{\mathcal{L}_2}$} 
\includegraphics[width=0.4\textwidth]{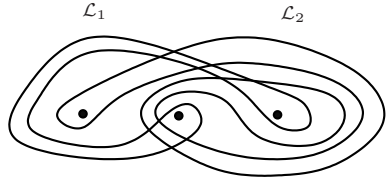}
%\resizebox{0.5\textwidth, angle=-90}{!}{\includegraphics{famcurves}}
\caption{$i(\mathcal{L}_1,\mathcal{L}_2)=10$}\label{3puncturedcases}
\end{figure}
\bigskip\noindent {\bf Acknowledgements:}
 The author would like to thank her supervisor, Dr. Toby Hall for his comments on the results of this paper most of which appeared in her Ph.D thesis.

\vspace{1cm}
\profile{S.Öykü \textsc{Yurttaş}}
{Mathematics Department,Science Faculty\\ University of Dicle, Diyarbakır, Turkey \\
saadet.yurttas@dicle.edu.tr}

\label{finishpage}

\end{document}